\documentclass[a4paper, 11pt]{amsart}
\usepackage[latin1]{inputenc}
\usepackage[ T1]{fontenc}
\usepackage[english]{babel}
\usepackage{amssymb}
\usepackage{amsmath}
\usepackage{amsthm}
\usepackage{amscd}
\usepackage{amsfonts}
\usepackage{stmaryrd}
\usepackage{pb-diagram}
\usepackage{epic,eepic,epsfig}
\usepackage{a4wide}
\usepackage{nextpage}
\usepackage{fancyhdr}
\usepackage{enumerate}

\pagestyle{fancy}
\fancyhead[LE,CE,RE,LO,CO,RO]{}
\fancyhead[LE,RO]{\thepage}
\fancyhead[CE]{\tiny\scshape{P. Autissier, M. Hindry and F. Pazuki}}
\fancyhead[CO]{\tiny\scshape{Regulators of elliptic curves}}
\fancyfoot[LE,CE,RE,LO,CO,RO]{}

\newtheorem{prop}{Proposition}[section]
\newtheorem{corollary}[prop]{Corollary}

\newtheorem{lemma}[prop]{Lemma}

\newtheorem{theorem}[prop]{Theorem}

\newtheorem{proposition}[prop]{Proposition}

\theoremstyle{definition}
\newtheorem{rem}[prop]{Remark}
\newtheorem{ques}[prop]{Question}
\newtheorem{definition}[prop]{Definition}

\newcommand{\Card} {\mathop{\mathrm{Card}}}

\newcommand{\Reg} {\mathop{\mathrm{Reg}}}
\newcommand{\RegL} {\mathop{\mathrm{Reg}_L}}

\newcommand{\Vol} {\mathop{\mathrm{Vol}}}
\newcommand{\Covol} {\mathop{\mathrm{Covol}}}

\usepackage[OT2,T1]{fontenc}
\DeclareSymbolFont{cyrletters}{OT2}{wncyr}{m}{n}
\DeclareMathSymbol{\Sha}{\mathalpha}{cyrletters}{"58}

\begin{document}

\title{Regulators of elliptic curves}
\author{{P}ascal {A}utissier, {M}arc {H}indry and {F}abien {P}azuki}
\address{Institut de Math\'ematiques de Bordeaux, Universit\'e de Bordeaux,
351, cours de la Lib\'e\-ra\-tion, 
33405 Talence, France.}
\email{pascal.autissier@math.u-bordeaux.fr}
\address{Institut de Math\'ematiques de Jussieu-PRG,
UFR math\'ematiques de l'Universit\'e Paris 7 Denis Diderot,
B\^atiment Sophie Germain,
5 rue Thomas Mann,
F-75205 Paris CEDEX 13.}
\email{marc.hindry@imj-prg.fr}
\address{Department of Mathematical Sciences, University of Copenhagen,
Universitetsparken 5, 
2100 Copenhagen \O, Denmark.}
\email{fpazuki@math.ku.dk}

\thanks{The three authors are supported by ANR-17-CE40-0012 Flair. The first and third authors are supported by ANR-14-CE25-0015 Gardio. The third author is supported by the DNRF Niels Bohr Professorship of Lars Hesselholt. The authors would like to thank Ga\"el R\'emond for his remarks, and the referee for his work and his helpful comments.}
\maketitle

\vspace{0.5cm}

\noindent \textbf{Abstract.}
We study the regulator of the Mordell-Weil group of elliptic curves over number fields, functions fields of characteristic zero or function fields of characteristic $p>0$. We prove a new Northcott property for the regulator of elliptic curves of rank at least $4$ defined over a number field. In the function field case, we obtain that non-trivial families of elliptic curves with bounded regulators and bounded ranks are limited families. We conclude by asking new questions.

{\flushleft
\textbf{Keywords:} Heights, abelian varieties, regulators, Mordell-Weil.\\
\textbf{Mathematics Subject Classification:} 11G50, 14G40. }

\begin{center}
---------
\end{center}

\begin{center}
\textbf{R\'egulateurs des courbes elliptiques}.
\end{center}

\noindent \textbf{R\'esum\'e.}
On \'etudie le r\'egulateur du groupe de Mordell-Weil des courbes elliptiques d\'efinies sur un corps de nombres, sur un corps de fonctions en caract\'eristique z\'ero ou sur un corps de fonctions en caract\'eristique $p>0$. On montre une propri\'et\'e de Northcott pour le r\'egulateur de familles de courbes elliptiques de rang au moins $4$ d\'efinies sur un corps de nombres. Dans le cas des corps de fonctions, on montre qu'une famille non triviale de courbes elliptiques de r\'egulateur et rang born\'es est une famille limit\'ee. On propose de nouvelles questions en derni\`ere partie.

{\flushleft
\textbf{Mots-Clefs:} Hauteurs, vari\'et\'es ab\'eliennes, r\'egulateurs, Mordell-Weil.\\
}

\begin{center}
---------
\end{center}

\thispagestyle{empty}


\maketitle

\section{Introduction}

Let $K$ be a number field or a function field of transcendence degree one over its field of constants $k$. Let $A$ be an abelian variety over $K$, where we assume $\mathrm{Tr}_{K/k}(A)=0$ in the case of function fields. Then the group of $K$-rational points $A(K)$ is finitely generated by N\'eron's thesis, generalizing the Mordell-Weil Theorem. For a modern exposition of the Lang-N\'eron results, we refer to \cite{Co06}. Giving more arithmetic information about the Mordell-Weil group is a difficult problem in general, as explained for instance in \cite{Hin}. A central notion attached to the group of $K$-rational points is the regulator. Its appearance in the strong form of the Birch and Swinnerton-Dyer conjecture, or in statements \textit{\`a la Brauer-Siegel} given in \cite{HiPa} stresses the importance of it when studying the arithmetic of elliptic curves and of abelian varieties in general. We refer to \cite{Paz14, Paz16} for the proof of a Northcott property of regulators of number fields, and for a conjectural similar statement for abelian varieties. The main result obtained here concerns the regulator of elliptic curves over number fields. We use $h(.)$ for the Weil height of algebraic numbers.

\begin{theorem}\label{blichreg}
Let $E$ be an elliptic curve defined over the number field $K$. Let $d=[K:\mathbb{Q}]$ and $m$ be the Mordell-Weil rank of $E(K)$. Assume $m\geq 1$. Denote $h=\max\{1, h(j_E)\}$. Then there is a $c(d,m)>0$ depending at most on $d$ and on $m$, such that $$\frac{\Reg(E/K)}{(\Card E(K)_{tors})^2}\geq c(d,m)\, h^{\frac{m-4}{3}}\Big(\log (3h)\Big)^{\frac{2m+2}{3}},$$
and one may take $c(d,m)=\frac{c^m}{m^m d^{m+2}(\log(3d))^2}$, where $c>0$ is an absolute constant.
\end{theorem}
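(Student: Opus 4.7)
The plan is to combine a geometry-of-numbers argument on the Mordell--Weil lattice with a Lehmer-type lower bound for the N\'eron--Tate heights of non-torsion points.

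First I would interpret $\Reg(E/K)$ geometrically. The module $L:=E(K)/E(K)_{\mathrm{tors}}$ is free of rank $m$, and equipped with the N\'eron--Tate pairing it sits as a rank-$m$ lattice in $L\otimes_{\mathbb Z}\mathbb R$ with norm $\|P\|=\hat h(P)^{1/2}$; then $\Reg(E/K)$ is the square of its covolume. Let $\lambda_1\le\cdots\le\lambda_m$ denote its successive minima. Minkowski's second theorem yields
\[
\Reg(E/K)\;\ge\;\frac{1}{c_m^2}\prod_{i=1}^m\lambda_i^2\;=\;\frac{1}{c_m^2}\prod_{i=1}^m\hat h(Q_i),\qquad c_m=\frac{2^m}{m!\,\omega_m},
\]
for lattice vectors $Q_1,\dots,Q_m$ realising those minima. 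The $1/m^m$ in the announced constant $c(d,m)$ already originates from $1/c_m^2$.

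Next I would bound each $\hat h(Q_i)$ from below using a Lehmer-type inequality for canonical heights on elliptic curves---of Hindry--Silverman or Petsche type---involving $h=\max\{1,h(j_E)\}$. Applying such a bound uniformly to all $m$ minima and multiplying would give a product of the form $h^{m\alpha}(\log 3h)^{m\beta}$ for fixed $(\alpha,\beta)$; the stated exponents $(m-4)/3$ and $(2m+2)/3$ are not of this shape, which signals that the analysis must split into two regimes. A natural split uses the strong Lehmer bound (with positive $h$-power) for most of the $\lambda_i$ and a weaker, $h$-independent universal bound \`a la Silverman for a fixed number (four) of the smallest minima; the resulting loss of $h$-power is exactly the ``$-4$'' shift in the effective exponent. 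The factor $(\Card E(K)_{\mathrm{tors}})^2$ on the left is cosmetic, matching the shape of the strong Birch--Swinnerton-Dyer formula, and is bounded by a constant depending only on $d$ via Mazur--Merel, hence absorbed into $c(d,m)$.

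\textbf{Main obstacle.} The crux is the Lehmer bound and its optimization. Hindry--Silverman's natural bound involves the Szpiro ratio $\sigma_E$, which must be eliminated by an a priori estimate in terms of $h$ or handled by case analysis on its size. The explicit constant $c^m/(m^m d^{m+2}(\log 3d)^2)$ reflects the combination of Minkowski ($1/m^m$), $m$ applications of the Lehmer bound (each contributing roughly $1/d$, for $1/d^m$), and a single Silverman-style universal bound of the form $\hat h(P)\ge c/d^2(\log 3d)^2$ used to close the gap at the smallest minima. Verifying that this combination yields exactly the stated exponents in $h$ and $\log 3h$ is the technical heart of the proof.
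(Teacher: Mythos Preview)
Your approach---bounding each successive minimum $\lambda_i$ individually via a Lehmer-type inequality and multiplying---is precisely the route the paper tries first and discards: using David's bounds $\lambda_i^2 \ge c\, h^{(i^2-4i-4)/(4i^2+4i)}$ on the individual minima, the product of these exponents becomes positive only at $m=16$, not $m=4$. Your heuristic of ``use the strong bound on $m-4$ minima and a universal bound on the four smallest'' does not match any known Lehmer inequality with exponent $1/3$ in $h$, and no such splitting produces the log-exponent $(2m+2)/3$. The ``$-4$'' in the theorem does not come from four exceptional minima.

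The paper's argument is genuinely different. It abandons the successive minima and instead applies a van der Corput (Blichfeldt) counting inequality: if $\Card\{P\in E(K):\hat h(P)\le H\}\le C$ then $\Reg_L(E/K)\ge H^m/(m^m C^2)\cdot(\Card E(K)_{\mathrm{tors}})^2$ (so the torsion factor is free, not obtained via Merel). Two independent counting results are fed into this: David's bound at a single place $v$, giving essentially $\Reg\gg h_v(E)^{m+2}/h^2$, and Petsche's bound via the Szpiro quotient, giving $\Reg\gg h(\Delta)^m/\sigma^{2m+4}$ up to logs. After passing to $K(E[12])$ for semistability, in the main case $h(\Delta)\ge h/2$ one lets $S'$ be the number of multiplicative places; then $h_v(E)\ge h(\Delta)d'/S'$ at the worst finite place, while a prime-counting lemma gives $\sigma \le h(\Delta)d'/(c_0 S'\log(S'/d'+2))$. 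The two resulting lower bounds are monotone in $S'$ in opposite directions, and balancing them at the threshold $S'\asymp h(\Delta)^{2/3}(\log 3h(\Delta))^{-(2m+2)/(3m+6)}$ is exactly what produces the exponents $(m-4)/3$ and $(2m+2)/3$. This optimization over $S'$, together with the simultaneous use of David and Petsche through the Blichfeldt lemma, is the missing idea in your proposal.
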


It implies immediately the following corollary.

\begin{corollary}\label{NorthReg}
Let $K$ be a number field and $m\geq4$ be an integer. There are at most finitely many $\overline{K}$-isomorphism classes of elliptic curves over $K$ with rank $m$ and bounded regulator.
\end{corollary}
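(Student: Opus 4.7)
The plan is a short deduction from Theorem \ref{blichreg} together with Northcott's theorem on algebraic numbers of bounded height and bounded degree. Fix the number field $K$ and set $d=[K:\mathbb{Q}]$; fix the integer $m \geq 4$, and suppose we are given a bound $R>0$ on the regulator. The goal is to show that only finitely many values of $j_E \in K$ can arise among the elliptic curves in question; since two elliptic curves over $K$ are $\overline{K}$-isomorphic if and only if they share the same $j$-invariant, this immediately yields the corollary.

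First, since $\Card E(K)_{\mathrm{tors}} \geq 1$, Theorem \ref{blichreg} applied to any elliptic curve $E/K$ of rank $m$ yields at once
\[
\Reg(E/K) \;\geq\; c(d,m)\, h^{\frac{m-4}{3}}\bigl(\log(3h)\bigr)^{\frac{2m+2}{3}},
\]
where $h = \max\{1, h(j_E)\}$ and $c(d,m) > 0$. Observe that no uniform torsion bound à la Mazur--Merel is needed: the torsion factor sits on the favourable side of the inequality and can simply be replaced by $1$.

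The hypothesis $m \geq 4$ is exactly what makes the exponent $(m-4)/3$ nonnegative and $(2m+2)/3 \geq 10/3 > 0$, so the right-hand side above is a strictly increasing function of $h \in [1,+\infty)$ tending to $+\infty$. Combining with $\Reg(E/K) \leq R$ therefore forces $h(j_E)$ to be bounded by some explicit constant $H=H(d,m,R)$. Since $j_E \in K$ and $[K:\mathbb{Q}]=d$ is fixed, Northcott's theorem produces only finitely many such $j_E$, and hence only finitely many $\overline{K}$-isomorphism classes. There is no substantial obstacle: all the analytic content has been absorbed into Theorem \ref{blichreg}, and the corollary is just a packaging of that lower bound through Northcott's finiteness principle.
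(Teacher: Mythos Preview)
Your proof is correct and follows essentially the same route as the paper: deduce from Theorem \ref{blichreg} (using $\Card E(K)_{\mathrm{tors}}\geq 1$) that a bounded regulator for fixed $m\geq 4$ forces $h(j_E)$ to be bounded, and then conclude finiteness of $\overline{K}$-isomorphism classes via Northcott. The paper's own argument is a one-line remark to the same effect at the end of the proof of Theorem \ref{blichreg}.
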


Theorem \ref{blichreg} has other interesting consequences. For instance, combining with classical estimates on the asymptotic of the number of points of bounded height on elliptic curves (see for instance \cite{Lang} Theorem 7.4 page 126 and Theorem 7.5 page 127), one remarks the following.

\begin{rem}\label{Count}
Pick an elliptic curve $E$ of rank $m\geq 1$ over the number field $K$ of degree $d$ and let $\hat{h}_E$ be the N\'eron-Tate height on $E$. Then
$$\Card \{P\in{E(K)}\,\vert\, \hat{h}_E(P)\leq T\}\sim c_E\, T^{m/2},$$
where $T\to +\infty$ is a real number, and 
$$c_E=\frac{\pi^{m/2}}{\Gamma(\frac{m}{2}+1)} \frac{\Card E(K)_{tors}}{\sqrt{\Reg(E/K)}},$$
$\Gamma$ being the gamma function. One sees here that $c_E$ is essentially the inverse of the square root of the quotient studied in Theorem \ref{blichreg}, which implies the estimate 
\begin{equation}
c_E\leq c'(d,m)\, h^{-\frac{m-4}{6}}\Big(\log (3h)\Big)^{-\frac{m+1}{3}}
\end{equation}
where $c'(d,m)$ is a positive real number depending at most on $d$ and $m$.
\end{rem}

The proof of Theorem \ref{blichreg} relies on a general approach to bound the regulator from below using counting arguments of points of small height and explained in Lemmas \ref{success} and \ref{successbis}. Direct application of counting results of David \cite{Da97} would then give $16$ instead of $4$ in Corollary \ref{NorthReg}. To be able to go down to $4$, we combine the counting results of David \cite{Da97} and of Petsche \cite{Pet06}, the Lemma \ref{success2} and provide a non-trivial optimization of parameters.

We also obtain similar statements in the function field case, with the natural restrictions inherent to this other context (non-isotriviality, bounded inseparability degree, limited families).

The article starts by recalling the relevant definitions in Section \ref{section def}. In Section \ref{general} we give the general approach to obtain lower bounds on the regulator of an abelian variety in any dimension. In Section \ref{ellNF} we focus on elliptic curves defined over a number field and we prove Theorem \ref{blichreg}. In Sections \ref{ellFF0} and \ref{ellFFp} we give the counterparts respectively for elliptic curves over function fields in characteristic zero and characteristic $p>0$. In Section \ref{questions} we propose new questions about the growth of regulators of elliptic curves and abelian varieties.

Throughout the text, we use the logarithm $\log$ normalized by $\log(e)=1$.

\section{Definitions}\label{section def}

\subsection{Number fields} Let $K$ be a number field of degree $d$. Let $M_K$ be a complete set of inequivalent absolute values $\vert.\vert_v$ on $K$, given local degrees $n_v$ and normalized by $\vert p\vert_v=1/p$ for any $v\vert p$, for the underlying rational prime $p$. Hence the product formula holds: $$\forall x\in{K-\{0\}},\quad \sum_{v\in{M_K}}n_v\log \vert x\vert_v =0.$$

The height on $K$ is defined by the formula $$h(x)=\frac{1}{d}\sum_{v\in{M_{K}}}n_v \log\max\{1, \vert x\vert_v\}.$$

Let $E$ be an elliptic curve defined over the number field $K$, with neutral element $O$. We define the N\'eron-Tate height on the group of rational points $E(K)$ with respect to $(O)$ by $$\widehat{h}_E(P)=\frac{1}{2}\lim_{n\to\infty}\frac{1}{n^2}h(x([n]P)).$$

Let $I\subset\mathcal{O}_K$ be a non-zero ideal of the ring of integers of $K$. Then one also defines the height of $I$ by $h(I)=\frac{1}{d}\log N_{K/\mathbb{Q}}(I)$. For any elliptic curve with conductor $F(E/K)$ and minimal discriminant $\Delta(E/K)$, we can now define the Szpiro quotient by $$\sigma_{E/K}=\frac{h(\Delta(E/K))}{h(F(E/K))},$$ with the convention that in the case of good reduction everywhere, we fix $\sigma_{E/K}=1$.

\subsection{Function fields} Let $K=k(\mathcal{C})$ be the function field of a smooth projective and geometrically connected curve $\mathcal{C}$ defined over $k$ and of genus $g$. We denote by $M_K$ a complete set of inequivalent valuations $v(.)$ and given with local degrees $n_v$ (from each point $a\in{\mathcal{C}}$ one has a valuation given for $x\in{K}$ by $a(x)=\mathrm{ord}_a(x)$, and $n_v=[k(a):k]$), which gives a normalization such that for any non-zero element $x\in{K}$, the product formula holds:  $$ \sum_{v\in{M_K}}n_v v(x)=0.$$ 

The height on $K$ is then defined as $h(0)=0$ and for any $x\in{K-\{0\}}$, we pose $$h(x)=\sum_{v\in{M_K}}n_v \max\{0, -v(x)\}.$$

Let $E$ be an elliptic curve defined over the function field $K$. We define the N\'eron-Tate height on the group of rational points $E(K)$ with respect to $(O)$ by $$\widehat{h}_E(P)=\frac{1}{2}\lim_{n\to\infty}\frac{1}{n^2}h(x([n]P)).$$

A divisor $I$ on $K$ is a formal sum $\sum_{v\in{M_K}}a_v \cdot v$ where $a_v\in{\mathbb{Z}}$ is zero for all but finitely many places $v$. The divisor is effective if for all $v$, $a_v\geq0$. In that case we pose $$h(I)=\sum_{v\in{M_K}}n_v a_v =\deg(I).$$

\subsection{Regulators of abelian varieties}

Let $K$ be a number field or a function field of transcendence degree one over its field of constants $k$.
Let $A/K$ be an abelian variety over the field $K$ polarized by an ample and symmetric line bundle $L$. We assume that $A$ has trace zero if $K$ is a function field. Let $m_K$ be the Mordell-Weil rank of $A(K)$. Let $\widehat{h}_{A,L}$ be the N\'eron-Tate height associated with the pair $(A,L)$. Let $<.,.>_L$ be the associated bilinear form, given by $$<P,Q>_L=\frac{1}{2}\Big(\widehat{h}_{A,L}(P+Q)-\widehat{h}_{A,L}(P)-\widehat{h}_{A,L}(Q)\Big)$$ for any $P,Q\in{A(K)}$. This pairing on $A\times A$ depends on the choice of $L$.

\begin{definition}\label{reg abvar}
Let $P_1, ..., P_{m_K}$ be a basis of the lattice $A(K)/A(K)_{\mathrm{tors}}$, where $A(K)$ is the Mordell-Weil group. The regulator of $A/K$ is defined by $$\mathrm{Reg}_{L}(A/K)= \det(<P_i,P_j>_{L\; 1\leq i,j\leq m_K}),$$ where by convention an empty determinant is equal to $1$.
\end{definition}

As for the height, the regulator from Definition \ref{reg abvar} depends on the choice of an ample and symmetric line bundle $L$ on $A$. 

One usually defines the regulator in a more intrinsic way, that doesn't depend on the choice of $L$. Start with the natural pairing on the product of $A$ with its dual abelian variety $\check{A}$ given by the Poincar\'e line bundle $\mathcal{P}$: for any $P\in{A(K)}$ and any $Q\in{\check{A}(K)}$, define $<P,Q>=\widehat{h}_{A\times \check{A},\mathcal{P}}(P,Q)$. Next choose a base $P_1, ..., P_{m_K}$ of $A(K)$ modulo torsion and a base $Q_1, ..., Q_{m_K}$ of $\check{A}(K)$ modulo torsion. Then define $$\Reg(A/K)= \vert\det(<P_i, Q_j>_{1\leq i,j\leq m_K})\vert.$$

Let us recall how these two regulators are linked (see for instance \cite{Hin}). Let $\Phi_L:A\to \check{A}$ be the isogeny given by $\Phi_L(P)=t_{P}^{*}L\otimes L^{-1}$. By Proposition 9.3.6 page 291 of \cite{BG}, one has the formula $$<P,Q>_L=\frac{1}{2}<P,\Phi_L(Q)>.$$ 
Hence if $u$ is the index of the subgroup $\Phi_L(\mathbb{Z}P_1\oplus ... \oplus\mathbb{Z}P_{m_K})$ in $\check{A}(K)$ modulo torsion, one has 

\begin{equation}\label{regulators}
\mathrm{Reg}_L(A/K)=u2^{-m_K}\mathrm{Reg}(A/K).
\end{equation}

Let us remark that when $L$ induces a principal polarization, the index $u$ is equal to $1$. Hence in the case of elliptic curves, we have $L=(O)$ and $$\mathrm{Reg}_{L}(E/K)=2^{-m_K}\mathrm{Reg}(E/K).$$

\section{Lower bounds for regulators of abelian varieties}\label{general}

\subsection{Minkowski and regulators}

A natural idea is to apply Minkowski's successive minima inequality to the Mordell-Weil lattice, we give it here as a lemma.

\begin{lemma}\label{success}
Let $K$ be a number field or a function field of transcendence degree one over its field of constants $k$. Let $A$ be an abelian variety over the field $K$, let $L$ be an ample and symmetric line bundle on $A$. We assume that $A$ has trace zero if $K$ is a function field. Let $m$ be the Mordell-Weil rank of $A(K)$. Assume $m\geq 1$. Let $\Lambda=A(K)/A(K)_{\mathrm{tors}}$. Then for any $i\in\{1, ..., m\}$, let us denote the Minkowski $i$th-minimum of $(\Lambda, \sqrt{\widehat{h}_{A,L}})$ by $\lambda_i$. We have
\begin{equation}\label{Minkowski2}
 \lambda_1 \cdots \lambda_{m}\leq m^{{m/2}}  (\mathrm{Reg}_{L}(A/K))^{1/2}.
\end{equation}
\end{lemma}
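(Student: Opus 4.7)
The plan is to invoke Minkowski's second theorem on the successive minima of the Mordell-Weil lattice. First, the group $\Lambda=A(K)/A(K)_{\mathrm{tors}}$ is free abelian of rank $m$. A fundamental property of the N\'eron-Tate height is that $\widehat{h}_{A,L}$ extends to a positive definite quadratic form on $V_{\mathbb{R}}=\Lambda\otimes_{\mathbb{Z}}\mathbb{R}$; this holds in the number field case, and in the function field case is guaranteed by the trace-zero hypothesis via the Lang-N\'eron theorem. Consequently $\Lambda$ sits inside $V_{\mathbb{R}}$ as a rank-$m$ Euclidean lattice, and by Definition \ref{reg abvar} its covolume is exactly $\sqrt{\Reg_L(A/K)}$.

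The quantities $\lambda_1\leq\cdots\leq\lambda_m$ of the statement are then precisely the successive minima of $\Lambda$ with respect to the Euclidean unit ball $B^m\subset V_{\mathbb{R}}$. Minkowski's second theorem then yields
\begin{equation*}
\lambda_1\cdots\lambda_m\cdot\Vol(B^m)\ \leq\ 2^m\,\sqrt{\Reg_L(A/K)},
\end{equation*}
and substituting the explicit value $\Vol(B^m)=\pi^{m/2}/\Gamma(m/2+1)$ produces the constant $2^m\Gamma(m/2+1)/\pi^{m/2}$ in front of $\sqrt{\Reg_L(A/K)}$.

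The only remaining task is to show that this constant is bounded above by $m^{m/2}$. At $m=1$ one has equality (since $\Gamma(3/2)=\sqrt{\pi}/2$), while for $m\geq 2$ the elementary bound $\Gamma(m/2+1)\leq(m/2)^{m/2}$ reduces the claim to $(2/\pi)^{m/2}\leq 1$, which is trivial. No step here is a genuine obstacle: the content of the lemma is Minkowski's second theorem, once one has identified $\sqrt{\Reg_L(A/K)}$ as the covolume of the Mordell-Weil lattice in the Euclidean space determined by the N\'eron-Tate pairing.
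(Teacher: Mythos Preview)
Your proof is correct and follows essentially the same route as the paper: apply Minkowski's second theorem to the Mordell-Weil lattice, identify the covolume with $\sqrt{\Reg_L(A/K)}$, substitute the volume of the unit ball, and then bound $2^m\Gamma(m/2+1)\leq(\pi m)^{m/2}$. Your version is slightly more explicit on two points---the positivity of the N\'eron-Tate form (via Lang-N\'eron in the function-field case) and the verification of the constant bound via $\Gamma(m/2+1)\leq(m/2)^{m/2}$ for $m\geq2$---but there is no difference in substance.
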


\begin{proof}

Minkowski's successive minima inequality reads

\[
 \lambda_1 \cdots \lambda_{m}\leq \frac{2^m \mathrm{vol}((\Lambda\otimes\mathbb{R})/\Lambda)}{\mathrm{vol}(B(0,1))}  .
\]

As the volume of a Euclidean unit ball in $\Lambda\otimes\mathbb{R}\simeq\mathbb{R}^m$ is $\frac{\pi^{m/2}}{\Gamma(\frac{m}{2}+1)}$, and as $\mathrm{vol}((\Lambda\otimes\mathbb{R})/\Lambda)=(\mathrm{Reg}_{L}(A/K))^{1/2}$ we obtain

\[
 \lambda_1 \cdots \lambda_{m}\leq \frac{2^m \Gamma(\frac{m}{2}+1)}{\pi^{m/2}}  (\mathrm{Reg}_{L}(A/K))^{1/2}.
\]

\noindent Now for any $m\geq1$, one gets easily $ 2^m \Gamma(\frac{m}{2}+1)\leq (\pi m)^{\frac{ m}{2}}$, it leads to the result.

\end{proof}

\begin{lemma}\label{successbis}
Let $A$ be an abelian variety over a number field $K$, let $L$ be an ample and symmetric line bundle on $A$. Let $m$ be the Mordell-Weil rank of $A(K)$. Let $\Lambda=A(K)/A(K)_{tors}$. Assume $m\geq 1$. Assume one has $$\Card\{P\in{\Lambda} \; \vert \; \widehat{h}_{A,L}(P) \leq H\} \leq C,$$ for $H, C$ two fixed positive real numbers. Then for any $1\leq i \leq m$, the $i$-th successive minimum of $\Lambda$ satisfies
\begin{equation}\label{lambis}
 \lambda_{i}^2 \geq  \frac{H}{i^2 C^{2/i}}.
\end{equation}
\end{lemma}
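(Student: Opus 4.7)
The plan is to exploit $i$ short linearly independent lattice vectors to build many distinct points of controlled height, and then to compare with the hypothesized counting bound $C$.

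First, by the definition of the successive minima, I would select $v_1,\ldots,v_i \in \Lambda$ linearly independent over $\mathbb{Z}$ with $\sqrt{\widehat{h}_{A,L}(v_j)} = \lambda_j \leq \lambda_i$ for each $j$. Since $\Lambda$ is torsion-free, such a choice exists and the map $(a_1,\ldots,a_i)\mapsto \sum_j a_j v_j$ is injective on $\mathbb{Z}^i$. Because $\sqrt{\widehat{h}_{A,L}}$ extends to a norm on $\Lambda\otimes_{\mathbb{Z}}\mathbb{R}$, the triangle inequality applied to tuples with $0\leq a_j\leq N$ gives
$$\widehat{h}_{A,L}\Big(\sum_{j=1}^i a_j v_j\Big) \leq \Big(\sum_{j=1}^i a_j \lambda_j\Big)^{2} \leq (Ni\lambda_i)^2,$$
and the $(N+1)^i$ resulting points are pairwise distinct in $\Lambda$.

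Next, I would set $N = \lfloor \sqrt{H}/(i\lambda_i) \rfloor$. Assuming $N\geq 1$, all $(N+1)^i$ points lie in $\{P\in\Lambda : \widehat{h}_{A,L}(P)\leq H\}$, so the hypothesis forces $(N+1)^i \leq C$, hence $N+1\leq C^{1/i}$. Combined with the defining property $N+1 > \sqrt{H}/(i\lambda_i)$ of the floor, this yields $\sqrt{H}/(i\lambda_i) < C^{1/i}$, i.e.\ $\lambda_i^2 > H/(i^2 C^{2/i})$, which is the desired inequality (\ref{lambis}). The edge case $N=0$ means $i\lambda_i > \sqrt{H}$, which is even stronger than the required bound since $C\geq 1$ (the point $0$ alone forces this).

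No serious obstacle is expected here: this is a classical Minkowski-style lattice counting argument, and the only delicate point is organizing the constants cleanly around the floor function, which is handled by the choice $N=\lfloor \sqrt{H}/(i\lambda_i)\rfloor$ above.
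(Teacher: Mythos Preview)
Your proof is correct and uses essentially the same pigeonhole idea as the paper: both build the box of sums $\sum_{j\leq i} a_j v_j$ with bounded nonnegative coefficients and compare its cardinality to $C$. The only cosmetic difference is the order of the argument---the paper fixes the coefficient range $0\leq a_j\leq C^{1/i}$ and deduces that some point has height exceeding $H$, whereas you fix $N=\lfloor \sqrt{H}/(i\lambda_i)\rfloor$ so that all points have height at most $H$ and then bound $N+1$ by $C^{1/i}$; these are contrapositive versions of the same step.
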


\begin{proof}
Let $P_1,\ldots, P_m$ be linearly independent points of $\Lambda$ satisfying $\lambda_{i}^2=\widehat{h}_{A,L}(P_i)$ for any $1\leq i\leq m$. Consider the set of all sums $a_1P_1+\cdots+a_iP_i$ where for any index $ j\in\{1, \ldots, i\}$ the integral coefficient $a_j$ satisfies $0\leq a_j\leq C^{1/i}$. It contains more than $C$ points, hence (by the assumption) at least one of them has height greater than $H$. This point can be written as $P=a_1P_1+\cdots+a_iP_i$, and $\widehat{h}_{A,L}(P)\leq i^2 \max\{\widehat{h}_{A,L}(a_j P_j)\vert 1\leq j\leq i\}$ by the triangular inequality for the norm $\sqrt{\widehat{h}_{A,L}(.)}$, hence $$H\leq \widehat{h}_{A,L}(P) \leq i^2 C^{2/i} \lambda_i^2.$$
\end{proof}

Lemma \ref{success} and Lemma \ref{successbis} combine to give the following.

\begin{corollary}\label{reg2}
Let $A$ be an abelian variety over a number field $K$, let $L$ be an ample and symmetric line bundle on $A$. Let $m$ be the Mordell-Weil rank of $A(K)$. Assume $m\geq 1$. Assume one has $$\Card\{P\in{A(K)/A(K)_{tors}} \; \vert \; \widehat{h}_{A,L}(P) \leq H\} \leq C,$$ for $H, C$ two fixed positive real numbers. Then
\begin{equation}\label{Minkow}
 \mathrm{Reg}_{L}(A/K) \geq  \frac{H^m}{m^m (m!)^2}\prod_{i=1}^{m}\frac{1}{C^{2/i}}.
\end{equation}
\end{corollary}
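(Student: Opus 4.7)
My plan is to combine the two preceding lemmas directly via the successive minima $\lambda_1, \ldots, \lambda_m$ of the Mordell-Weil lattice $\Lambda = A(K)/A(K)_{\mathrm{tors}}$ equipped with the norm $\sqrt{\widehat{h}_{A,L}}$. These minima form the natural bridge: Lemma \ref{successbis} furnishes a lower bound on each individual $\lambda_i^2$ in terms of the counting data $H$ and $C$, while Lemma \ref{success} furnishes an upper bound on the product $\lambda_1 \cdots \lambda_m$ in terms of $\mathrm{Reg}_L(A/K)$. Taking a product on one side and comparing with the other side extracts the desired lower bound on the regulator.

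Concretely, I would first invoke Lemma \ref{successbis} for every index $i \in \{1, \ldots, m\}$ under the counting hypothesis, which gives
\[
\lambda_i^2 \geq \frac{H}{i^2 C^{2/i}}.
\]
Multiplying these $m$ inequalities together, the factors $i^2$ assemble into $(m!)^2$, producing
\[
\prod_{i=1}^m \lambda_i^2 \;\geq\; \frac{H^m}{(m!)^2} \prod_{i=1}^m \frac{1}{C^{2/i}}.
\]

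Next I would apply Lemma \ref{success} and square it to get $\prod_{i=1}^m \lambda_i^2 \leq m^m \, \mathrm{Reg}_L(A/K)$. Substituting the previous inequality into the left-hand side and solving for $\mathrm{Reg}_L(A/K)$ yields exactly the claimed estimate \eqref{Minkow}.

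Honestly, once Lemmas \ref{success} and \ref{successbis} are in hand there is no real obstacle: the corollary is a mechanical combination, and the only genuine content is already hidden in Lemma \ref{successbis}, where a pigeonhole argument on the cube of linear combinations of the first $i$ basis vectors forces at least one combination to exceed the height threshold $H$. The optimization between the two bounds, which will matter later in the elliptic-curve applications, does not arise at this abstract stage.
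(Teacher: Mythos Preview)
Your proof is correct and follows exactly the route the paper indicates: the authors simply state that Lemma~\ref{success} and Lemma~\ref{successbis} combine to give the corollary, and your argument---multiply the individual bounds $\lambda_i^2 \geq H/(i^2 C^{2/i})$ over $i=1,\dots,m$ and compare with the squared Minkowski inequality $\prod_i \lambda_i^2 \leq m^m\,\mathrm{Reg}_L(A/K)$---is precisely that combination.
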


\subsection{Regulators and van der Corput}

We show how to improve on Corollary \ref{reg2} by invoking van der Corput.

\begin{lemma}\label{success2}
Let $A$ be an abelian variety over a number field $K$, let $L$ be an ample and symmetric line bundle on $A$. Let $m$ be the Mordell-Weil rank of $A(K)$. Assume $m\geq 1$. Assume one has $$\Card\{P\in{A(K)/A(K)_{tors}} \; \vert \; \widehat{h}_{A,L}(P) \leq H\} \leq C,$$ for $H, C$ two fixed positive real numbers. Then
\begin{equation}\label{Blich}
 \mathrm{Reg}_{L}(A/K) \geq  \frac{H^m}{m^{m} C^2}.
\end{equation}
\end{lemma}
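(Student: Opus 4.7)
The plan is to apply van der Corput's refinement of Minkowski's first theorem directly to the Mordell-Weil lattice, rather than going through the successive minima as in Corollary \ref{reg2}. Set $\Lambda = A(K)/A(K)_{\mathrm{tors}}$ and equip $\Lambda \otimes \mathbb{R} \simeq \mathbb{R}^m$ with the Euclidean structure coming from $\sqrt{\widehat{h}_{A,L}}$, so that the covolume of $\Lambda$ equals $\sqrt{\mathrm{Reg}_L(A/K)}$. The counting hypothesis then reads $\#(B\cap \Lambda) \leq C$ for the closed ball $B = B(0,\sqrt{H}) \subset \Lambda \otimes \mathbb{R}$.

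Next I would invoke van der Corput's inequality: for any centrally symmetric convex body $K$ in $\mathbb{R}^m$ and any lattice $\Lambda$ of covolume $d$, one has
\[
\#(K \cap \Lambda) \;\geq\; 2\Bigl\lfloor \frac{\mathrm{vol}(K)}{2^{m}\,d}\Bigr\rfloor + 1.
\]
Applied to $K=B$, combined with $\#(B\cap \Lambda) \leq C$, this forces $2\lfloor \mathrm{vol}(B)/(2^m\sqrt{\mathrm{Reg}_L(A/K)})\rfloor + 1 \leq C$, hence
\[
\frac{\mathrm{vol}(B)}{2^{m}\sqrt{\mathrm{Reg}_L(A/K)}} \;<\; \frac{C+1}{2},
\]
and so, squaring and inserting $\mathrm{vol}(B)=\pi^{m/2}H^{m/2}/\Gamma(\tfrac{m}{2}+1)$,
\[
\mathrm{Reg}_L(A/K) \;>\; \frac{\mathrm{vol}(B)^{2}}{4^{m-1}(C+1)^{2}} \;=\; \frac{\pi^{m}\,H^{m}}{4^{m-1}\,\Gamma(\tfrac{m}{2}+1)^{2}\,(C+1)^{2}}.
\]

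Finally, I would reuse the elementary estimate $2^{m}\Gamma(\tfrac{m}{2}+1)\leq (\pi m)^{m/2}$ already used in the proof of Lemma \ref{success} to simplify the right-hand side to $4H^{m}/(m^{m}(C+1)^{2})$, and conclude with the trivial observation that $4/(C+1)^{2} \geq 1/C^{2}$ for $C \geq 1$ (and $C\geq 1$ is automatic since $0\in B\cap \Lambda$).

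The only delicate point is invoking van der Corput's theorem in the appropriate sharp form; the rest is a calculation parallel to the proof of Lemma \ref{success}. Conceptually the gain over Corollary \ref{reg2} is precisely that van der Corput bounds the covolume directly from the total lattice-point count, bypassing the product of successive minima and thereby replacing the factor $\prod_{i=1}^{m}C^{-2/i}$ by a single $C^{-2}$.
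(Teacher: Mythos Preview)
Your proof is correct and follows essentially the same route as the paper: both apply van der Corput's theorem to the ball $B(0,\sqrt{H})$ in the Mordell--Weil lattice and then use the estimate $2^{m}\Gamma(\tfrac{m}{2}+1)\leq (\pi m)^{m/2}$. The only cosmetic difference is that the paper invokes van der Corput in the weaker form $\#(\mathcal{K}\cap\Lambda)\geq \Vol(\mathcal{K})/(2^{m}\Covol(\Lambda))$ and reaches $H^{m}/(m^{m}C^{2})$ directly, whereas you use the sharper floor-function version and clean up at the end via $4/(C+1)^{2}\geq 1/C^{2}$.
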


\begin{proof}
Consider $\mathcal{K}=\{P\in{A(K)\otimes \mathbb{R}} \; \vert \; \widehat{h}_{A,L}(P) \leq H\}$. It is a compact symmetric convex set (hence with finite volume) in $A(K)\otimes \mathbb{R}\simeq \mathbb{R}^m$. We apply van der Corput's result\footnote{In the spirit of Blichfeldt's principle.}, see for instance Theorem 7.1 in \cite{GL}, to the lattice $\Lambda=A(K)/A(K)_{tors}$ to obtain $$\Card(\mathcal{K}\cap \Lambda)\geq \frac{\Vol(\mathcal{K})}{2^m\Covol(\Lambda)}= \frac{H^{m/2}}{2^m (\mathrm{Reg}_L(A/K))^{1/2}}\frac{\pi^{m/2}}{\Gamma(\frac{m}{2}+1)}.$$ This gives directly $\frac{H^{m/2}}{m^{m/2}(\mathrm{Reg}_L(A/K))^{1/2}}\leq C$, hence the claim.
\end{proof}

\begin{rem}\label{success3}
Lemma \ref{success2} implies directly (with the same notation) that if
$$\Card\{P\in{A(K)} \; \vert \; \widehat{h}_{A,L}(P) \leq H\} \leq C,$$ then one has 
\begin{equation}\label{Blich2}
 \mathrm{Reg}_{L}(A/K) \geq  \frac{H^m}{m^{m} C^2} (\Card A(K)_{tors})^2.
\end{equation}
\end{rem}

\section{Regulators of elliptic curves over number fields}\label{ellNF}

To warm up, we will first combine Lemma \ref{success} with various height lower bounds.

\subsection{Szpiro and finiteness}

Let us first extract Corollary 4.2 page 430 from \cite{HiSi3}. 

\begin{theorem}(Hindry-Silverman)\label{nf1}
Let $K$ be a number field of degree $d$. Let $E/K$ be an elliptic curve and let $P\in{E(K)}$ be a non-torsion point. Then $$\hat{h}_E(P)\geq (20\sigma_{E/K})^{-8d} 10^{-4\sigma_{E/K}} h(E),$$ where $h(E)=\frac{1}{12}\max\{h(\Delta(E/K)), h(j_E)\}$, with $\Delta(E/K)$ the minimal discriminant of $E/K$ and $j_E$ its  $j$-invariant.
\end{theorem}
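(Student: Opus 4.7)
The plan is to prove this Lang-type lower bound via the N\'eron decomposition of the canonical height
\[
\hat h_E(P)=\frac{1}{d}\sum_{v\in M_K}n_v\hat\lambda_v(P)
\]
and careful place-by-place bounds on the local N\'eron function $\hat\lambda_v$ evaluated along multiples of $P$. The target quantity $h(E)=\frac{1}{12}\max\{h(\Delta(E/K)),h(j_E)\}$ enters the right-hand side because the natural normalization of $\hat\lambda_v$ at a bad place is set by $h(\Delta_v)$, together with the global identity $\sum_v n_v h(\Delta_v)=d\cdot h(\Delta(E/K))$.

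At each non-archimedean place $v$ of bad reduction I would rely on the Tate algorithm. The component group of the N\'eron model has order $c_v$ bounded by a function of the Kodaira symbol, and the oscillation of $\hat\lambda_v$ as a function of the component is of the form $\beta_v\cdot h(\Delta_v)$, where $\beta_v$ is a rational determined by the same Kodaira data (cf.\ Silverman's tables). The role of the Szpiro quotient is to turn this local combinatorial information into a global bound: at every bad $v$ one has $h(\Delta_v)\leq\sigma_{E/K}\cdot h(F_v)$, and summing over $v$ controls the total negative contribution from bad primes in terms of $\sigma_{E/K}$ and $h(F(E/K))\leq h(\Delta(E/K))$. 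At good-reduction places $\hat\lambda_v$ is non-negative on $E(K_v)$, so these places only help the lower bound.

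For the archimedean places, the key input is the Weierstrass $\sigma$-function representation of $\hat\lambda_v$ together with Elkies's estimate: there is an explicit finite set $S_v\subset E(K_v)$ such that
\[
\hat\lambda_v(Q)\;\geq\;-c\cdot\log\bigl(1/\mathrm{dist}(Q,S_v)\bigr)
\]
holds for all $Q\notin S_v$, and the cardinality of $S_v$ (as well as the implied constant) grows only like $10^{O(\sigma_{E/K})}$. Combining this with the bad-prime analysis, I would run a pigeonhole argument on the multiples $[1]P,\ldots,[N]P$ with $N=(20\sigma_{E/K})^{8d}\,10^{4\sigma_{E/K}}$: if $\hat h_E(P)$ were smaller than the asserted lower bound, all of these multiples would have very small canonical height; yet, being non-torsion, none may collapse onto any element of the $S_v$, and each picks a component of the N\'eron model at every bad prime. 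Checking that $N$ exceeds the product of the $c_v$ and the $\vert S_v\vert$ forces two indices $1\leq m<n\leq N$ to share the same profile, so that $Q=[n-m]P$ is, place by place, either in the good part or close to the origin in a controlled way. The local upper bounds on $\hat\lambda_v(Q)$ then conflict with the identity $\hat h_E(Q)=(n-m)^2\hat h_E(P)\leq N^2\hat h_E(P)$, yielding the desired contradiction.

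The main obstacle is the archimedean side: making Elkies's covering by $S_v$, and the associated estimate on $\log\vert\sigma_v(z)\vert$, fully explicit in the archimedean invariants of $E$. This is precisely where the factor $10^{-4\sigma_{E/K}}$ is born, and where the product over the at most $d$ infinite places feeds into the exponent of $(20\sigma_{E/K})^{-8d}$. By contrast, the non-archimedean bookkeeping is conceptually clean once the Tate algorithm and the definition of $\sigma_{E/K}$ are in hand; the final translation between $h(\Delta(E/K))$ and $h(j_E)$ through the standard comparison underlying the definition of $h(E)$ is routine, and the remaining work is to collect all constants into the stated shape.
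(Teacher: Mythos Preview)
The paper does not prove this theorem at all: it is quoted verbatim as Corollary~4.2 of \cite{HiSi3} and used as a black box to feed into inequality~(\ref{Minkowski2}). So there is no ``paper's own proof'' to compare against; your proposal is really a sketch of the original Hindry--Silverman argument, not of anything in the present article.

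As a sketch of \cite{HiSi3} your outline has the right architecture (local N\'eron decomposition, pigeonhole on multiples of $P$ to land on the identity component at bad places, Elkies at the archimedean places), but two points are off. First, the inequality ``$h(\Delta_v)\leq\sigma_{E/K}\cdot h(F_v)$ at every bad $v$'' is false: the Szpiro quotient is global and there is no reason for the local orders of $\Delta$ and of the conductor to satisfy this at each place. What Hindry--Silverman actually use is that $\prod_v c_v$ is bounded in terms of $\sigma_{E/K}$ and the number of bad primes, which is where the factor $10^{-4\sigma_{E/K}}$ comes from; your attribution of this factor to the archimedean side is backwards. Second, your description of the archimedean input is not Elkies's lemma. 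Elkies's result is an averaging inequality of the shape $\sum_{i\neq j}\hat\lambda_\infty(P_i-P_j)\geq -cN\log N$ for any $N$ points, not a statement about a finite exceptional set $S_v$ and distances. If you rewrite the archimedean step in that form and replace the place-by-place Szpiro claim by the global control of $\prod_v c_v$, the pigeonhole-then-average argument goes through as in \cite{HiSi3}.
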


Note that \cite{Da97} or Theorem 2 page 259 of \cite{Pet06} give an inverse polynomial dependence in the Szpiro quotient and in the degree of the number field.

We get the following corollary.

\begin{corollary}\label{Kclass}
Let $K$ be a number field of degree $d$. There are at most finitely many $K$-isomorphism classes of elliptic curves over $K$ with non-trivial bounded rank, bounded regulator and bounded Szpiro quotient.
\end{corollary}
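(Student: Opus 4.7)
The plan is to combine Lemma \ref{success} with the lower bound of Theorem \ref{nf1} to force a bound on the naive height $h(E)$, and then to invoke Shafarevich's theorem to conclude finiteness of $K$-isomorphism classes.

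In the elliptic curve case one has $L=(O)$, so the first successive minimum of $\Lambda=E(K)/E(K)_{\mathrm{tors}}$ satisfies
$$\lambda_1^2=\min\{\widehat{h}_E(P) : P\in E(K),\; P\notin E(K)_{\mathrm{tors}}\}.$$
Since $\lambda_1\leq\lambda_i$ for every $i$, Lemma \ref{success} yields $\lambda_1^{2m}\leq m^m\,\mathrm{Reg}_L(E/K)$, hence
$$\lambda_1^2\leq m\,\mathrm{Reg}_L(E/K)^{1/m}.$$
Applying Theorem \ref{nf1} to a non-torsion point realizing $\lambda_1^2$, and using the identity $\mathrm{Reg}_L(E/K)=2^{-m}\mathrm{Reg}(E/K)$ recorded in the paper, gives
$$h(E)\leq (20\sigma_{E/K})^{8d}\,10^{4\sigma_{E/K}}\,m\,\mathrm{Reg}_L(E/K)^{1/m}.$$
Under the hypotheses of the corollary — fixed degree $d$, bounded rank $m$, bounded regulator $\mathrm{Reg}(E/K)$, and bounded Szpiro quotient $\sigma_{E/K}$ — the right-hand side is uniformly bounded. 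Consequently $h(E)$, and in particular $h(\Delta(E/K))$, is uniformly bounded.

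To conclude, note that the conductor $F(E/K)$ divides the minimal discriminant $\Delta(E/K)$, so $h(F(E/K))\leq h(\Delta(E/K))$ is also uniformly bounded. As only finitely many ideals of $\mathcal{O}_K$ have bounded norm, the conductor $F(E/K)$ lies in a finite list of possibilities. Shafarevich's theorem then ensures that for each such conductor there are only finitely many $K$-isomorphism classes of elliptic curves over $K$ with good reduction outside the primes dividing it, and the corollary follows. The main obstacle in this plan is really the combination carried out in the middle paragraph: controlling $h(E)$ from below by the first successive minimum via Minkowski is clean, but turning this into a usable height bound requires the Hindry--Silverman estimate with its awkward exponential dependence on $\sigma_{E/K}$, which is precisely why the Szpiro quotient has to be assumed bounded. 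The final Shafarevich step is then essentially mechanical.
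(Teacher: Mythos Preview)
Your argument is correct and follows essentially the same route as the paper: combine Lemma~\ref{success} with Theorem~\ref{nf1} to obtain the inequality
\[
\mathrm{Reg}_L(E/K)\geq \frac{1}{m^m}\Big((20\sigma_{E/K})^{-8d}10^{-4\sigma_{E/K}}h(E)\Big)^m,
\]
which under the stated hypotheses forces $h(E)$ to be bounded. The only difference is in the final finiteness step: the paper appeals to the Northcott property for the Faltings height (Corollary~2.5 of \cite{CoSi}), noting that $h(E)$ controls $h_F(E)$, whereas you bound the conductor via the discriminant and invoke Shafarevich's theorem; both routes are standard and equally valid.
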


\begin{proof}
Combine Theorem \ref{nf1} and (\ref{Minkowski2}) to get the inequality 
\begin{equation}\label{RegSzp}
\mathrm{Reg}_L(E/K)\geq \frac{1}{m^m} \Big( (20\sigma_{E/K})^{-8d} 10^{-4\sigma_{E/K}} h(E) \Big)^m.
\end{equation}
As soon as the rank $m$ is positive and bounded, and as long as $\sigma_{E/K}$ is bounded, a bounded regulator implies a bounded height. Finiteness of the set of $K$-isomorphism classes follows, see Corollary 2.5 of \cite{CoSi} page 259 for instance. Note that our height is equivalent to Faltings's height $h_F(E)$ used by Silverman in the aforementioned corollary, one easily gets $h_F(E)\ll h(E)$ from the explicit formula for $h_F(E)$ given in the same reference.
\end{proof}

\subsection{Big ranks and finiteness}

Another approach is based on Corollaire 1.6 page 111 of \cite{Da97}, which we recall here.

\begin{proposition} (David)\label{DavidJNT}
Let $K$ be a number field of degree $d$. Let $E$ be an elliptic curve defined over $K$ of rank $m\geq 1$. Let $\Lambda=E(K)/E(K)_{tors}$. Let $i$ be an integer such that $1\leq i\leq m$ and $\lambda_i$ the $i$-th successive minimum. Let $h=\max\{1, h(j_E)\}$. Then $$\lambda_i^2\geq c_{16}(i) h^{(i^2-4i-4)/(4i^2+4i)}d^{-(i+2)/i}\Big(1+\frac{\log(d)}{h}\Big)^{-2/i},$$ and in particular $$\lambda_5^2\geq c_{17} h^{1/120}d^{-7/5}\Big(1+\frac{\log(d)}{h}\Big)^{-2/5}.$$
\end{proposition}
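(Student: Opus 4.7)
The statement is recalled verbatim from Corollaire 1.6 of \cite{Da97}, so strictly speaking no new proof is needed in the present paper; what follows is how I would organize a reconstruction of the argument.

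The plan is to reduce the successive-minima bound to a counting estimate and then invoke Lemma \ref{successbis}. Concretely, if one can show that for well-chosen $H$ one has
\[
\Card\{P \in \Lambda : \widehat{h}_E(P) \leq H\} \leq C,
\]
where $C$ is bounded in terms of $d$, $h$ and $i$, then Lemma \ref{successbis} gives directly $\lambda_i^2 \geq H/(i^2 C^{2/i})$. The whole game is therefore to produce a sharp upper bound on the number of rational points of small N\'eron--Tate height on $E/K$.

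To obtain such a counting estimate I would follow the transcendence approach of \cite{Da97}: assuming the existence of an abnormally large set of points with $\widehat{h}_E(P) \leq H$, one constructs an auxiliary polynomial function of controlled bidegree on a suitable power $E^n$, vanishing to high order along the subgroup generated by those points. One then invokes the Philippon zero estimate on commutative algebraic groups together with the elliptic exponential parametrization, and estimates the archimedean and non-archimedean local sizes of the auxiliary function via theta-function bounds and the comparison between $\widehat{h}_E$ and the naive Weil height (here the quantity $\max\{1,h(j_E)\}$ enters through the control of the minimal model and the period lattice). The logarithmic correction $(1+\log(d)/h)^{-2/i}$ is the trace of an unavoidable $\log d$ term which appears when $h$ is small compared to $\log d$.

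Finally one optimizes $H$ as a function of $d$, $h$ and $i$. Balancing the resulting exponents produces precisely $(i^2-4i-4)/(4i^2+4i)$ for $h$ and $-(i+2)/i$ for $d$; the case $i=5$ is then a direct substitution. The main obstacle is, as always in this type of transcendence argument, the construction and the Schwarz-lemma-style analytic estimate of the auxiliary function: this is the technical core of David's proof and the place where the peculiar shape of the exponents is forced.
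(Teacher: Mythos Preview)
Your assessment is correct: the paper gives no proof of this proposition, simply quoting it as Corollaire~1.6 of \cite{Da97}. Your reconstruction sketch---reducing to a counting bound for points of small N\'eron--Tate height via an argument of the type in Lemma~\ref{successbis}, and obtaining that counting bound through David's transcendence machinery (auxiliary function, Philippon zero estimate, local height comparisons involving $h(j_E)$)---is an accurate outline of how \cite{Da97} proceeds, and the remark that the exponents arise from an optimization in $H$ is on point.
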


We deduce the following unconditional statement.

\begin{corollary}
Let $K$ be a number field. There are at most finitely many $\overline{K}$-isomorphism classes of elliptic curves over $K$ of fixed rank $m\geq 16$ with bounded regulator.
\end{corollary}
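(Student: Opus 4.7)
The strategy is to combine the Minkowski-type bound of Lemma~\ref{success} with David's lower bounds on successive minima (Proposition~\ref{DavidJNT}). Rearranging inequality~(\ref{Minkowski2}) applied to the principal polarization $L=(O)$ gives
$$
\Reg_L(E/K) \;\geq\; \frac{1}{m^m}\prod_{i=1}^{m}\lambda_i^2,
$$
and substituting David's estimate $\lambda_i^2 \geq c_{16}(i)\, h^{(i^2-4i-4)/(4i^2+4i)} d^{-(i+2)/i}(1+\log(d)/h)^{-2/i}$ for each index $i\in\{1,\ldots,m\}$ yields a lower bound of the shape
$$
\Reg_L(E/K) \;\geq\; C(d,m)\, h^{\alpha(m)}\Big(1+\tfrac{\log d}{h}\Big)^{-\beta(m)},
$$
where $h=\max\{1,h(j_E)\}$, $\beta(m)=\sum_{i=1}^{m} 2/i$, $C(d,m)>0$ depends only on $d$ and $m$, and
$$
\alpha(m) \;=\; \sum_{i=1}^{m}\frac{i^2-4i-4}{4i^2+4i}.
$$

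The arithmetic heart of the proof is to verify that $\alpha(m)>0$ as soon as $m\geq 16$. Writing $a_i=(i^2-4i-4)/(4i(i+1))$, one checks that $a_i<0$ for $1\leq i\leq 4$ (the dominant negative term being $a_1=-7/8$), while $a_i>0$ for every $i\geq 5$, and the sequence $(a_i)$ is strictly increasing with limit $1/4$. A direct finite computation shows $\alpha(15)<0<\alpha(16)$: the positive contributions from $i\in\{5,\ldots,16\}$ just suffice to compensate the initial negative mass. Because $a_i>0$ for $i\geq 5$, monotonicity then yields $\alpha(m)\geq\alpha(16)>0$ for every $m\geq 16$.

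Once this threshold is secured, the rest is routine. Since $h\geq 1$, the factor $(1+\log(d)/h)^{-\beta(m)}$ is bounded below by a positive constant depending only on $d$ and $m$, so the estimate collapses to $\Reg_L(E/K)\geq C'(d,m)\, h^{\alpha(m)}$ with $\alpha(m)>0$. For a fixed rank $m\geq 16$ over a fixed number field $K$ of degree $d$, a uniform bound on the regulator therefore forces $h(j_E)$ to be bounded. Since $j_E\in K$, Northcott's theorem produces only finitely many possible values of $j_E$; as two elliptic curves over $K$ are $\overline{K}$-isomorphic precisely when their $j$-invariants agree, this gives the desired finiteness of $\overline{K}$-isomorphism classes.

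The only genuine obstacle is the explicit verification that $\alpha(m)$ becomes positive exactly at $m=16$: conceptually the argument is just a product of known inequalities, but the cancellation between the four initial negative terms and the slowly growing positive tail is tight, and it is precisely this tightness that pins the threshold at rank $16$ rather than somewhere lower.
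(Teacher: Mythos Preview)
Your proof is correct and follows essentially the same route as the paper: combine Lemma~\ref{success} with David's Proposition~\ref{DavidJNT}, compute the exponent $\alpha(m)=\sum_{i=1}^{m}\frac{i^2-4i-4}{4i^2+4i}$, verify numerically that it first becomes positive at $m=16$, and deduce boundedness of $h(j_E)$ hence finiteness of $\overline{K}$-isomorphism classes. The paper records the explicit values $\alpha(16)\geq 0.009$ and $\alpha(15)\leq -0.16$, but otherwise the arguments coincide.
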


\begin{proof}
By combining Proposition \ref{DavidJNT} with (\ref{Minkowski2}) one obtains 

$$\mathrm{Reg}_{L}(E/K)\geq \frac{1}{m^{m}}\prod_{i=1}^m c_{16}(i) h^{(i^2-4i-4)/(4i^2+4i)}d^{-(i+2)/i}\Big(1+\frac{\log(d)}{h}\Big)^{-2/i}.$$

A direct computation shows that $$\sum_{i=1}^{16}\frac{i^2-4i-4}{4i^2+4i}\geq 0.009\quad\quad\quad \mathrm{and}\quad\quad\quad \sum_{i=1}^{15}\frac{i^2-4i-4}{4i^2+4i}\leq -0.16.$$ Hence as soon as $m\geq 16$, if $m$ and $d$ are fixed, a bounded regulator implies a bounded $j$-invariant, which implies the finiteness of the set of $\overline{K}$-isomorphism classes for a fixed base field $K$. 
\end{proof}

We will now show how to improve on this corollary using counting results from \cite{Da97, Pet06} and our Lemma \ref{success2}. We start with a small technical lemma that will help later on.

\begin{lemma}\label{chebych}
Let $K$ be a number field of degree $d$ and let $I$ be a non-zero integral ideal in $\mathcal{O}_K$. Let $S$ be the number of prime ideals dividing $I$. There is an absolute constant $c_0>0$ such that $$\log N(I)\geq c_0 S \log(\frac{S}{d}+2),$$ where $N(I)$ stands for the norm of the ideal $I$. 
\end{lemma}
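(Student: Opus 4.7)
The plan is to reduce the question to a counting estimate on the smallest prime ideals of $\mathcal{O}_K$ by norm, and then invoke Chebyshev-type bounds. Writing the prime factorization $I = \prod_{i=1}^S \mathfrak{p}_i^{a_i}$ with each $a_i \geq 1$, one has
$$\log N(I) \;=\; \sum_{i=1}^S a_i \log N(\mathfrak{p}_i) \;\geq\; \sum_{i=1}^S \log N(\mathfrak{p}_i),$$
so it is enough to bound $\sum_{i=1}^S \log N(\mathfrak{q}_i)$ from below over $S$-tuples of distinct prime ideals $\mathfrak{q}_1, \dots, \mathfrak{q}_S$ of $\mathcal{O}_K$, and the extremal case is the one where these are the $S$ primes of smallest norm.

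The central input I would use is the elementary observation that for every $x \geq 1$, the number of prime ideals of $\mathcal{O}_K$ with norm at most $x$ is bounded by $d\,\pi(x)$, since every rational prime $p \leq x$ has at most $d$ primes of $\mathcal{O}_K$ above it, each of norm $\geq p$. Ordering the norms $N_1 \leq N_2 \leq \cdots \leq N_S$ of any distinct choice, this gives $N_i \geq p_{\lceil i/d \rceil}$, where $p_k$ denotes the $k$-th rational prime. Summing in blocks of $d$ consecutive indices then produces
$$\sum_{i=1}^S \log N_i \;\geq\; d\sum_{k=1}^{\lfloor S/d \rfloor} \log p_k \;=\; d\,\theta\!\left(p_{\lfloor S/d \rfloor}\right),$$
where $\theta$ is Chebyshev's first function. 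Applying the elementary Chebyshev bounds $\theta(y) \geq c_1 y$ for $y \geq 2$ and $p_k \geq c_2 k \log k$ for $k \geq 2$, with absolute constants $c_1,c_2 > 0$, I obtain a lower bound of size $c_3 S \log(S/d)$ in the regime $S \geq 2 d$.

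The complementary range $S < 2d$ will be handled by the trivial estimate $\log N(\mathfrak{q}_i) \geq \log 2$, which gives $\log N(I) \geq S \log 2$, combined with $\log(S/d + 2) \leq \log 4$ in this range. The role of the shift $+2$ in $\log(S/d+2)$ is precisely to allow a single absolute constant to cover both regimes: one checks $\log(S/d+2) \leq 2\log(S/d)$ as soon as $S/d \geq 2$, while it is uniformly bounded whenever $S/d \leq 2$. Taking $c_0$ to be the smaller of the two constants produced in the two regimes yields the claim. I do not expect any genuine obstacle here: the only point to watch is that the Chebyshev estimates invoked are the classical elementary ones, so the final $c_0$ is truly absolute, independent of $K$, $d$, $I$, and $S$.
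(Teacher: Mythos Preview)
Your proposal is correct and follows the same strategy as the paper: both reduce to the $S$ prime ideals of smallest norm, use that at most $d$ prime ideals lie above each rational prime to obtain $N_i \geq p_{\lceil i/d\rceil}$, and then estimate the resulting sum $\sum_k \log p_k$. The only difference is cosmetic: where you invoke the Chebyshev-type bounds $\theta(y)\gg y$ and $p_k\gg k\log k$ and split into the regimes $S\geq 2d$ and $S<2d$, the paper writes $S=dq+r$, uses only the trivial inequality $p_i\geq i+1$ to get $\sum\log N_i\geq d\log(q+1)!+r\log(q+2)$, and concludes via a Stirling-type bound in a single chain of inequalities without a case split.
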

\begin{proof}
Above each rational prime number $p$, there are at most $d$ prime ideals in $\mathcal{O}_K$. Let us write $S=dq+r$ the Euclidean division of $S$ by $d$, where $0\leq r<d$. Then one has $$\log N(I)\geq d \log p_1+\ldots+d\log p_q+r \log p_{q+1},$$ where $p_i$ denotes the $i$-th rational prime number. An easy bound from below is $\log p_i \geq \log (i+1)$ for any integer $i\geq1$, hence $$\log N(I) \geq d\log (q+1)! +r \log (q+2)\geq c_0 dq\log(q+3)+c_0r\log(q+3)\geq c_0 S \log(\frac{S}{d}+2).$$
\end{proof}

We now give the proof of Theorem \ref{blichreg}.

\begin{proof} (of Theorem \ref{blichreg} and Corollary \ref{NorthReg})
By Theorem 1.2 of \cite{Da97} and Proposition 8 of \cite{Pet06}, one has two independent upper bounds, with $c_1, c_2, c_3, c_4$ positive absolute constants:

\begin{itemize}

\item[(a)] $
\Card\{P\in{E(K)\,\vert\, \hat{h}_E(P) \leq c_1\frac{h_v(E)}{d} }\}\leq c_2 \frac{d h}{h_v(E)}(1+\frac{\log d }{h}),$ where one defines $h_v(E)=\max\{n_v \log\vert j_E\vert_v, \rho_v\}$, with $\rho_v=0$ if $v$ is a finite place of multiplicative reduction and $\rho_v=\sqrt{3}/2$ if $v$ is archimedean.
\\

\item[(b)] $
\Card\Big\{P\in{E(K)\,\vert\, \hat{h}_E(P) \leq  c_3\frac{h(\Delta(E/K))}{\sigma^2 }}\Big\}\leq c_4 d\sigma^2 \log(c_4d\sigma^2)$, where $\sigma=\sigma_{E/K}$ is the Szpiro quotient.
\\
\end{itemize}

First case: if one considers an elliptic curve $E/K$ with $h(j_E)<1$, one obtains by $(a)$, as $h=1$ in this case, for the choice $v$ archimedean and Lemma \ref{success2} (and Remark \ref{success3}):

\begin{equation}
\frac{\Reg(E/K)}{(\Card E(K)_{tors})^2}\geq \frac{\RegL(E/K)}{(\Card E(K)_{tors})^2}\geq \frac{c_1^m h_v(E)^{m+2}}{c_2^2d^{m+2}} \frac{1}{m^{m} (1+\log d)^{2}}\geq \frac{c_1^m \sqrt{3}^{m+2} }{c_2^2 (2d)^{m+2}} \frac{1}{m^{m} (1+\log d)^{2}},
\end{equation}
which is the claimed lower bound in this case.

Thus we may assume from now on that $h(j_E)\geq 1$. Let us denote $K'=K(E[12])$. The elliptic curve $E$ is semi-stable over $K'$ and the degree $d'=[K':\mathbb{Q}]\leq [K:\mathbb{Q}] \Card\mathrm{GL}_2(\mathbb{Z}/12\mathbb{Z}) = 4608 d$. Note that for any place $v$ of $K'$ one has  $$\Card\Big\{P\in{E(K)} \; \vert \; \widehat{h}_{E}(P) \leq c_1\frac{h_v(E)}{d'} \Big\} \leq \Card\Big\{P\in{E(K')} \; \vert \; \widehat{h}_{E}(P) \leq c_1\frac{h_v(E)}{d'} \Big\}.$$ Similarly, with $\sigma=\sigma_{E/K'}$ and $\Delta=\Delta(E/K')$, one has $$\Card\Big\{P\in{E(K)} \; \vert \; \widehat{h}_{E}(P) \leq c_3\frac{h(\Delta)}{\sigma^2} \Big\} \leq \Card\Big\{P\in{E(K')} \; \vert \; \widehat{h}_{E}(P) \leq c_3 \frac{h(\Delta)}{\sigma^{2}} \Big\}.$$ Both $h$ and $\widehat{h}_{E}(P)$ are invariant by field extension, so $(a), (b)$ and Lemma \ref{success2} (or Remark \ref{success3}, with $H=c_1h_v(E)/d'$ and $C=c_2d'h(1+(\log d')/h)/h_v(E)$ for the first inequality and $H=c_3 h(\Delta)/\sigma^2$ and $C=c_4 d'\sigma^2 \log(c_4d'\sigma^2)$ for the second inequality) imply that for any elliptic curve $E/K$ with $h(j_E)\geq 1$, and keeping $m$ the rank of $E$ over $K$, for any place $v$ of $K'$ of multiplicative reduction or archimedean,

\begin{equation}
\frac{\Reg(E/K)}{(\Card E(K)_{tors})^2}\geq \max\left\{\frac{c_1^m h_v(E)^{m+2}}{c_2^2 d'^{m+2} h^2 (1+\frac{\log d'}{h})^{2}}, \frac{c_3^m h(\Delta)^m}{\sigma^{2m} (c_4 d'\sigma^2 \log(c_4d'\sigma^2))^2}  \right\} \frac{1}{m^{m}}.
\end{equation}

\noindent This implies

\begin{equation}
\frac{\Reg(E/K)}{(\Card E(K)_{tors})^2}\geq \max\left\{\frac{c_1^m}{c_2^2 d'^{m+2}}\frac{ h_v(E)^{m+2}}{ h^2}, \frac{c_3^m}{ (c_4 d')^2} \frac{ h(\Delta)^{m}}{\sigma^{2m+4} (\log(c_4 d' \sigma^2))^2}  \right\} \frac{1}{m^{m} (1+\frac{\log d'}{h})^{2}}.
\end{equation}

Now using $d'\leq 4608 d$ and $\log(c_4d'\sigma^2)\ll\log(3h(\Delta))$ with an implied constant depending at most on $d$, and let $c(d,m)>0$ stand for a quantity depending at most on $d$ and $m$ and that may slightly change along the way, we get

\begin{equation}\label{pivot}
\frac{\Reg(E/K)}{(\Card E(K)_{tors})^2}\geq \max\left\{\frac{ h_v(E)^{m+2}}{ h^2},  \frac{ h(\Delta)^{m}}{\sigma^{2m+4} (\log(3h(\Delta)))^2}  \right\} c(d,m).
\end{equation}

Second case: assume $h(\Delta)\leq \frac{h}{2}$. As one has $$h=h(j_E)=h(\Delta)+\frac{1}{d'}\sum_{\substack{v\in{M_{K'}}\\v\vert\infty}} n_v\log\max\{\vert j_E\vert_v, 1\},$$ there exists an archimedean place $v$ such that $\log\max\{\vert j_E\vert_v, 1\}\geq \frac{h}{2}$, hence one deduces $h_v(E)\geq \frac{h}{2}$. So (\ref{pivot}) implies

\begin{equation}
\frac{\Reg(E/K)}{(\Card E(K)_{tors})^2}\geq c(d,m) \frac{ (h/2)^{m+2}}{ h^2} \geq c(d,m) h^m,
\end{equation}
which is a better lower bound than the one claimed.

Third case: assume $h(\Delta)\geq \frac{h}{2}$. Let $S'$ stand for the number of places in $K'$ where $E$ has multiplicative reduction. We have in particular $S'\neq0$ and by considering $v$ the finite place of $K'$ with maximal $h_v(E)$, one gets by the semi-stability assumption

\begin{equation}\label{hv}
h_v(E)\geq \frac{h(\Delta) d'}{S'}
\end{equation}
and by Lemma \ref{chebych} there is an absolute $c_0>0$ such that 

\begin{equation}
\frac{h(\Delta)d'}{\sigma}=\log N(F(E/K'))\geq c_0 S'\log(\frac{S'}{d'}+2),
\end{equation}
hence

\begin{equation}\label{sig}
\frac{h(\Delta)d'}{c_0 S'\log(\frac{S'}{d'}+2)}\geq  \sigma,
\end{equation}
inject (\ref{hv}) and (\ref{sig}) in (\ref{pivot}), and get:

\begin{equation}\label{piv}
\frac{\Reg(E/K)}{(\Card E(K)_{tors})^2}\geq c(d,m) \max\left\{\frac{ h(\Delta)^{m}}{(S')^{m+2} },  \frac{ h(\Delta)^{m}}{\Big(\frac{h(\Delta)}{S'\log(\frac{S'}{d'}+2)}\Big)^{2m+4} \Big(\log(3h(\Delta))\Big)^2}  \right\}.
\end{equation}

Subcase (i): assume $S'\leq d' h(\Delta)^{\frac{2}{3}} (\log(3h(\Delta))^{-\frac{2m+2}{3m+6}}$. Then the first part of (\ref{piv}) implies 

\begin{equation}
\frac{\Reg(E/K)}{(\Card E(K)_{tors})^2}\geq c(d,m) h(\Delta)^{m-\frac{2}{3}(m+2)}\Big(\log(3h(\Delta))\Big)^{\frac{(2m+2)(m+2)}{3m+6}},
\end{equation}
which gives directly

\begin{equation}
\frac{\Reg(E/K)}{(\Card E(K)_{tors})^2}\geq c(d,m)  h^{\frac{m-4}{3}}\Big(\log(3h)\Big)^{\frac{2m+2}{3}},
\end{equation}
as claimed.

Subcase (ii): assume $S'> d' h(\Delta)^{\frac{2}{3}} (\log(3h(\Delta))^{-\frac{2m+2}{3m+6}}$. Then the second part of (\ref{piv}) implies 

\begin{equation}
\frac{\Reg(E/K)}{(\Card E(K)_{tors})^2}\geq c(d,m) h(\Delta)^{m-\frac{1}{3}(2m+4)}\Big(\log(3h(\Delta))\Big)^{\frac{2m+2}{3}},
\end{equation} 
which ends the analysis of this last case with

\begin{equation}
\frac{\Reg(E/K)}{(\Card E(K)_{tors})^2}\geq c(d,m) h^{\frac{m-4}{3}}\Big(\log(3h)\Big)^{\frac{2m+2}{3}}.
\end{equation} 
A careful study of the different cases gives the claim for $c(d,m)$ and this concludes the whole proof of Theorem \ref{blichreg}.\\

To obtain Corollary \ref{NorthReg}, note that a bounded regulator and a fixed rank $m\geq4$ implies a bounded $j$-invariant, hence the claimed finiteness.
\end{proof}

\begin{rem}
There exists infinitely many elliptic curves over $\mathbb{Q}$ with rank at least $19$, we refer to the work of Elkies \cite{El08}. They are obtained by specialization of an elliptic curve with rank at least $19$ over $\mathbb{Q}(T)$. 
\end{rem}

\begin{rem}
It is easier to produce elliptic curves of rank at least $5$ over $\mathbb{Q}(T)$, and we thank Jean-Fran\c{c}ois Mestre for pointing this out. Pick a generic long Weierstrass equation $(W): y^2+a_1xy+a_3y=x^3+a_2x^2+a_4x+a_6.$ We have five parameters $(a_1,a_3,a_2,a_4,a_6)$. Choose the points $(0,0), (1,1), (2,3), (3,-1), (T,2)$. There exists a unique non-constant set of parameters $(a_1,a_3,a_2,a_4,a_6)\in{\mathbb{Q}(T)^5}$ such that $(W)$ passes through these five points. These points are generically independent on the curve defined by $(W)$. See also \cite{Mes} for explicit examples with rank bigger than 11 over $\mathbb{Q}(T)$. Then use specialization to obtain families over $\mathbb{Q}$ with big ranks.
\end{rem}

\section{Regulators of elliptic curves over function fields of characteristic zero}\label{ellFF0}

We will combine lemma \ref{success} with the following theorem.

\begin{theorem}\label{ff0}(Hindry-Silverman \cite{HiSi3}, Theorem 6.1 page 436)
Let $K$ be a function field of characteristic zero and genus $g$. Let $E/K$ be an elliptic curve of discriminant $\Delta(E/K)$ and let $P\in{E(K)}$ be a non-torsion point. Then $$\hat{h}_E(P)\geq 10^{-15.5-23g} h(E),$$ where $h(E)=\frac{1}{12}\deg \Delta(E/K)$. The same inequality is valid for elliptic curves over function fields in characteristic $p>0$ if the $j$-map has inseparable degree $1$.
\end{theorem}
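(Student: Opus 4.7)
The plan is to follow the Hindry--Silverman strategy: decompose the canonical height as a sum of local contributions, give a lower bound at each place, and use a combinatorial pigeonhole argument to absorb the negative contributions appearing at places of split multiplicative reduction via the Tate parametrization. In characteristic zero, the rigid analytic Tate uniformization is available at every multiplicative place, and inseparability degree one ensures the same construction works in characteristic $p>0$, so both statements admit a uniform treatment.

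First I would write
\[
\hat{h}_E(P) = \sum_{v \in M_K} n_v \, \hat{\lambda}_v(P),
\]
using the normalized N\'eron local height functions. At a place $v$ of good reduction where $P$ does not reduce to the origin, $\hat{\lambda}_v(P) \geq 0$. At a place of additive reduction, $\hat{\lambda}_v(P)$ is bounded below by an explicit constant depending only on the Kodaira type, so the contribution can be absorbed via the Ogg--Saito formula relating the local contribution of $\Delta(E/K)$ to the additive fibers.

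The heart of the matter is the split multiplicative case. Via $E(\bar{K}_v) \simeq \bar{K}_v^* / q_v^{\mathbb{Z}}$ with $m_v = -v(j_E) = v(\Delta_v) > 0$, one obtains
\[
\hat{\lambda}_v(P) = \tfrac{1}{2}\, m_v \, B_2\!\bigl(\alpha_v(P)\bigr) + O(1),
\]
where $\alpha_v(P) \in [0,1)$ records the class of $P$ in the component group $\mathbb{Z}/m_v\mathbb{Z}$ and $B_2(x) = x^2 - x + 1/6$. Since $\min B_2 = -1/12$, naive summation fails. I would then fix an integer $N$ (to be optimized) and consider the multiples $P, 2P, \ldots, NP$, whose components evolve as $\alpha_v(nP) = \{n\, \alpha_v(P)\}$. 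A simultaneous Dirichlet--type pigeonhole across all bad places produces an index $n$ for which $\alpha_v(nP)$ lies close to $0$ at every bad $v$, forcing the Bernoulli values to be bounded below by a positive absolute constant. Combined with the quadratic scaling $\hat{h}_E(nP) = n^2 \hat{h}_E(P)$, this gives
\[
\hat{h}_E(P) \geq \frac{c}{N^2} \sum_v n_v m_v = \frac{12c}{N^2}\, h(E).
\]

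The main obstacle is optimizing $N$ against the genus $g$. The simultaneous pigeonhole must succeed across up to $\deg \Delta(E/K)$ bad places, and the admissible range of $N$ is constrained both by this number and by bounds on the torsion subgroup $E(K)_{\mathrm{tors}}$, which, in the function field setting, grows with the genus of the base curve $\mathcal{C}$. Propagating these dependencies explicitly through the local estimates yields the factor $10^{-23g}$ in the final constant, while the absolute factor $10^{-15.5}$ comes from the Bernoulli bound together with the separation threshold used to turn $B_2$ non-negative. For the characteristic-$p$ refinement, one only needs to check that separability of the $j$-map forbids the degenerations that would invalidate the Tate uniformization; granted that, the arithmetic of local heights is identical to the characteristic zero case.
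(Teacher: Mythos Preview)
The paper does not prove this theorem: it is quoted verbatim from Hindry--Silverman \cite{HiSi3}, Theorem~6.1, and used as a black box. So there is no ``paper's own proof'' to compare against; your proposal is an attempt to reconstruct the original Hindry--Silverman argument.

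Your outline is broadly in the right spirit---local height decomposition, Tate uniformization at multiplicative places, the $B_2$ formula, and passage to multiples of $P$---but two points deserve correction. First, the combinatorial step in \cite{HiSi3} is \emph{not} a simultaneous Dirichlet pigeonhole forcing every $\alpha_v(nP)$ near $0$ at once; that would produce a dependence exponential in the number of bad places and would not yield the stated constants. Rather, Hindry--Silverman \emph{average}: they sum $\hat{h}_E(nP)=n^2\hat{h}_E(P)$ over $n=1,\ldots,N$ and exploit that $\sum_{n=1}^{N}B_2(\{n\alpha\})$ admits a uniform lower bound, so no single good $n$ is ever exhibited. Second, your account of where the genus enters is off. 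The $10^{-23g}$ does not come from torsion bounds or from counting bad places directly; it comes from the function-field Szpiro inequality (Kodaira/Szpiro), which bounds the Szpiro quotient $\sigma_{E/K}$ explicitly in terms of $g$. The Hindry--Silverman machine produces a constant depending on $\sigma$, and substituting the Szpiro bound converts this into a dependence on $g$. This is also why the characteristic-$p$ clause requires separable $j$-map: Szpiro's inequality (in the form needed) uses that hypothesis.
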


We can now deduce the following statement by looking at Moret-Bailly's results on families of abelian varieties with bounded height. We first recall the definition of a \textit{limited family}\footnote{\textit{Confer} D\'efinition 1.1 page 212 of \cite{MB85} for limited families in more general settings.}.

\begin{definition}
Let  ${\mathcal A}_1$ be the coarse moduli space of elliptic curves over $K$ (\textit{i.e.} the affine line of $j$-invariants). Let ${\mathcal E}$ be a set of elliptic curves defined over $K$. The set ${\mathcal E}$ is limited if there exists a variety $T$ over the field of constants $k\subset K$, an open set $U\subset T_K$ surjective on $T$ and a $k$-morphism $f:U\to {\mathcal A}_1$ such that for any $E\in{\mathcal E}$, there is an $x\in T(k)$ with $f(x_K)$ being the image of $E$ in ${\mathcal A}_1(K)$.
\end{definition}

\begin{corollary}
Let $K$ be a function field of characteristic zero and genus $g$. The set of elliptic curves of trace zero over $K$, with non-trivial bounded rank and bounded regulator is limited.
\end{corollary}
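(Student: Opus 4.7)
The plan is to imitate the proof of Corollary \ref{Kclass} but with the function-field input Theorem \ref{ff0} instead of the Hindry--Silverman number-field lower bound, and then to convert the resulting uniform upper bound on the discriminant into a limitedness statement via Moret-Bailly's results in \cite{MB85}.

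First I would extract from Lemma \ref{success} a bound on the first successive minimum $\lambda_1$ of the Mordell--Weil lattice $\Lambda=E(K)/E(K)_{\mathrm{tors}}$ equipped with the norm $\sqrt{\widehat{h}_E}$. Since $\lambda_1\leq\lambda_2\leq\cdots\leq\lambda_m$ we have $\lambda_1^m\leq \lambda_1\cdots\lambda_m$, so Lemma \ref{success} applied to $L=(O)$ yields
\begin{equation*}
\lambda_1^2 \leq m\,\bigl(\mathrm{Reg}_L(E/K)\bigr)^{1/m}.
\end{equation*}
On the other hand, $\lambda_1^2$ is exactly the infimum of $\widehat{h}_E(P)$ over non-torsion $P\in E(K)$, so Theorem \ref{ff0} gives
\begin{equation*}
\lambda_1^2 \geq 10^{-15.5-23g}\, h(E),\qquad h(E)=\tfrac{1}{12}\deg\Delta(E/K).
\end{equation*}
Combining the two inequalities,
\begin{equation*}
h(E) \leq 10^{15.5+23g}\,m\,\bigl(\mathrm{Reg}_L(E/K)\bigr)^{1/m}.
\end{equation*}
Thus, in any family with $g$ fixed, $m$ bounded (and $\geq 1$), and $\mathrm{Reg}_L(E/K)$ bounded, the quantity $\deg\Delta(E/K)$ is bounded by an explicit constant depending only on $g$, on the rank bound, and on the regulator bound.

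Having a uniform bound on $\deg\Delta(E/K)$, I would conclude by invoking the function-field version of the Shafarevich/Szpiro finiteness principle in the form of Moret-Bailly's theorem on limited families of polarized abelian varieties with bounded height (\cite{MB85}, cf.\ the discussion around D\'efinition 1.1). In our situation this says that the set of elliptic curves $E/K$ of trace zero with bounded $\deg\Delta(E/K)$ is limited in the sense of the definition recalled just before the corollary: there is a scheme $T$ of finite type over $k$, an open $U\subset T_K$ dominating $T$, and a $k$-morphism $f:U\to\mathcal{A}_1$ to the coarse moduli space of $j$-invariants such that every such $E$ appears as $f(x_K)$ for some $x\in T(k)$. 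Applying this to the curves in our family finishes the proof.

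The main potential obstacle is the last step: making sure that the form of Moret-Bailly's limitedness theorem cited here genuinely gives a parametrization by a $k$-variety in the precise sense of the definition, and not merely finiteness up to $\overline{K}$-isomorphism or up to isogeny. In characteristic zero and for elliptic curves with trace zero (so that the $j$-invariant is non-constant and the isotrivial case is excluded), this is exactly what \cite{MB85} provides once the discriminant degree is bounded, since the bound on $\deg\Delta$ controls the degree of the morphism from $\mathcal{C}$ to the $j$-line and hence places the $j$-invariants inside a bounded-degree Hom-scheme. Everything upstream of this step is a straightforward combination of Lemma \ref{success} and Theorem \ref{ff0}, so the content of the corollary is essentially: bounded regulator plus bounded rank $\Rightarrow$ bounded discriminant degree $\Rightarrow$ limitedness.
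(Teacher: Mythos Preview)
Your proposal is correct and follows essentially the same route as the paper: combine Lemma \ref{success} with Theorem \ref{ff0} to obtain $\mathrm{Reg}_L(E/K)\geq m^{-m}\bigl(10^{-15.5-23g}\,h(E)\bigr)^m$, deduce that bounded rank and regulator force bounded $h(E)$, and then appeal to Moret-Bailly for limitedness. The only refinement worth making is to cite the precise result you need in \cite{MB85}, namely Th\'eor\`eme 4.6 on page 236, rather than the general discussion around the definition of limited families.
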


\begin{proof}
Apply Theorem \ref{ff0} and (\ref{Minkowski2}) to get the inequality $$\mathrm{Reg}_L(E/K)\geq \frac{1}{m^m} \Big(10^{-15.5-23g} h(E)\Big)^m.$$ As soon as one fixes $m>0$, a bounded regulator implies a bounded height. Then apply \cite{MB85} Th\'eor\`eme 4.6 page 236.
\end{proof}

\section{Regulators of elliptic curves over function fields of positive characteristic}\label{ellFFp}

We will combine lemma \ref{success} with the following consequence of \cite{HiSi3}, which improves on Lemma 6.6 page 998 of \cite{Nas16}.

\begin{theorem}(based on Hindry-Silverman \cite{HiSi3}, see also \cite{Nas16})\label{ffp}
Let $P$ be a point of infinite order on $E$ over $k(\mathcal{C})$ of positive characteristic $p$ and genus $g$, and assume that the $j$-map of $E$ has inseparable degree $p^e$. Then one has $$\widehat{h}_E(P)\geq p^{-2e} 10^{-15.5-23g} h(E).$$
\end{theorem}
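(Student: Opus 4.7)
The plan is to reduce the statement to Theorem \ref{ff0} by absorbing the inseparability of the $j$-map into an iterated Frobenius isogeny.

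First I would produce an elliptic curve $E'/K$ whose $j$-map is separable, together with a purely inseparable isogeny $\phi=F^e\colon E'\to E$ of degree $p^e$ equal to the $e$-th iterated relative Frobenius. The hypothesis that the $j$-map of $E$ has inseparable degree $p^e$ forces $K^{p^e}$ to coincide with the separable closure of $k(j_E)$ inside $K$, so in particular $j_E\in K^{p^e}$. A suitable twist of the Weierstrass model of $E$ can then be chosen with all coefficients in $K^{p^e}$; extracting $p^e$-th roots defines $E'$, so that $E=(E')^{(p^e)}$, $j_{E'}=j_E^{1/p^e}$, and $k(j_{E'})\hookrightarrow K$ is separable by hypothesis. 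Let $\psi\colon E\to E'$ denote the Verschiebung, so $\phi\circ\psi=[p^e]_E$ and $\psi\circ\phi=[p^e]_{E'}$; observe that $\psi(P)\in E'(K)$ is non-torsion since $\phi(\psi(P))=[p^e]P$ is non-torsion.

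Next, as $\phi$ has degree $p^e$, the pull-back by $\phi$ of a symmetric degree-one line bundle on $E$ is symmetric of degree $p^e$ on $E'$, hence algebraically equivalent to $p^e$ times the degree-one line bundle of $E'$. The N\'eron--Tate heights therefore transform as $\widehat{h}_E(\phi(Q))=p^e\,\widehat{h}_{E'}(Q)$ for every $Q\in E'(K)$. Applying this with $Q=\psi(P)$, together with the quadraticity $\widehat{h}_E([p^e]P)=p^{2e}\,\widehat{h}_E(P)$ and the identity $\phi(\psi(P))=[p^e]P$, one obtains
$$\widehat{h}_E(P)=p^{-e}\,\widehat{h}_{E'}(\psi(P)).$$

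Since the $j$-map of $E'$ is separable, the last sentence of Theorem \ref{ff0} applies to $E'$ and the non-torsion point $\psi(P)$, giving $\widehat{h}_{E'}(\psi(P))\geq 10^{-15.5-23g}\,h(E')$. It remains to compare $h(E)$ and $h(E')$: twisting a minimal Weierstrass model of $E'$ by the $p^e$-th power Frobenius produces a (possibly non-minimal) Weierstrass model of $E$ whose discriminant divisor is $p^e$ times that of $E'$, so $h(E)\leq p^e\,h(E')$. Combining,
$$\widehat{h}_E(P)=p^{-e}\,\widehat{h}_{E'}(\psi(P))\geq p^{-e}\cdot 10^{-15.5-23g}\,h(E')\geq p^{-2e}\cdot 10^{-15.5-23g}\,h(E),$$
the required inequality. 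The most delicate step is the initial descent: showing that under the inseparable-degree hypothesis one may genuinely realize $E$ as a Frobenius twist $(E')^{(p^e)}$ for some $E'/K$ with separable $j$-map, which requires a careful analysis of twists of the Weierstrass equation, with special attention to the $j=0$ and $j=1728$ cases in residue characteristics $2$ and $3$. Once the descent is secured, the remainder is a routine application of functoriality of the N\'eron--Tate height under an isogeny together with Theorem \ref{ff0}.
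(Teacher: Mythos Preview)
Your proof is correct and follows essentially the same route as the paper: both realize $E$ as the $p^e$-th Frobenius twist of a curve $E'$ (the paper's $E_0$) with separable $j$-map, apply the dual isogeny (your Verschiebung $\psi$, the paper's $\widehat{\phi}^{e}$) to $P$, use the degree-$p^e$ functoriality of the N\'eron--Tate height to obtain $\widehat{h}_E(P)=p^{-e}\widehat{h}_{E'}(\psi(P))$, and then invoke Theorem~\ref{ff0}. The paper asserts the equality $h(E)=p^{e}h(E')$ where you only prove the inequality $h(E)\leq p^{e}h(E')$, and it does not spell out the descent to $E'$; your more cautious treatment of both points is harmless and the argument is the same.
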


\begin{proof}
There exists a curve $E_0$ of inseparable degree $1$ such that $\phi^{e}:E_0\to E=E_0^{(p^{e})}$ is the $e$-th Frobenius isogeny. Denote the dual isogeny by $\widehat{\phi}^{e}$. We compute $$\widehat{h}_E(P)=p^{-e}\widehat{h}_{E_0}(\widehat{\phi}^{e}(P))\geq p^{-e}10^{-15.5-23g} h(E_0),$$
where this last inequality comes from the separable case (see Theorem \ref{ff0}) and since $h(E)=p^{e}h(E_0)$, we are done.
\end{proof}

In link with this Theorem \ref{ffp}, the interested reader should note that the remark page 434 of \cite{HiSi3} is inaccurate. Moreover, in Theorem 7 of \cite{GoSz} one should assume $e=0$.

\begin{corollary}
Let $K$ be a function field of characteristic $p>0$ and genus $g$. The set of elliptic curves of trace zero over $K$, with non-trivial bounded rank, bounded inseparability degree and bounded regulator is limited, and finite when $K$ has finite constant field.
\end{corollary}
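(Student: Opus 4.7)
The plan is to mirror the characteristic zero corollary in Section \ref{ellFF0}, substituting Theorem \ref{ffp} for Theorem \ref{ff0} in order to absorb the inseparability defect $p^{e}$. First I would combine the non-torsion lower bound
$$\widehat{h}_E(P)\geq p^{-2e}\,10^{-15.5-23g}\, h(E)$$
of Theorem \ref{ffp} with Minkowski's successive minima inequality (\ref{Minkowski2}) of Lemma \ref{success}: since every successive minimum $\lambda_i$ of the lattice $E(K)/E(K)_{\mathrm{tors}}$ is bounded below by $\sqrt{\widehat{h}_E(P)}$ evaluated on its first non-torsion point, one obtains
$$\mathrm{Reg}_L(E/K)\geq \frac{1}{m^m}\bigl(p^{-2e}\,10^{-15.5-23g}\, h(E)\bigr)^m.$$

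Next, if the rank $m\geq 1$, the inseparability degree $p^{e}$ and the regulator are all bounded while $g$ stays fixed, this inequality produces a uniform upper bound on $h(E)=\frac{1}{12}\deg\Delta(E/K)$, that is, a uniform upper bound on the degree of the minimal discriminant. I would then invoke Théorème 4.6 page 236 of \cite{MB85} exactly as in the characteristic zero corollary to deduce that the family is limited: there exist a $k$-variety $T$ of finite type, an open $U\subset T_K$ surjective onto $T$, and a $k$-morphism $f:U\to \mathcal{A}_1$ such that every $j$-invariant of the family arises from a $k$-point of $T$.

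For the finiteness statement when $k$ is finite, $T$ is of finite type over $k=\mathbb{F}_q$, so $T(\mathbb{F}_q)$ is a finite set and only finitely many $j$-invariants in $K$ can appear in the family. A bounded-discriminant family of elliptic curves over $K$ with fixed $j$-invariant contains only finitely many $K$-isomorphism classes, since twists are controlled by the (bounded) places of bad reduction — for $j\neq 0,1728$ through quadratic twists classified by $K^{*}/K^{*2}$ ramified only at those places, and for $j=0,1728$ by analogous sextic or quartic twists. The step I expect to be the main obstacle is precisely this twist-counting argument to pass from finitely many $j$-invariants to finitely many isomorphism classes; if one is content with finiteness of $\overline{K}$-isomorphism classes, then the obstacle vanishes, since the $j$-invariant already classifies such curves, and the conclusion is immediate from the finiteness of $T(\mathbb{F}_q)$.
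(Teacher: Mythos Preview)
Your proposal is correct and follows exactly the paper's route: combine Theorem \ref{ffp} with (\ref{Minkowski2}) to obtain $\mathrm{Reg}_L(E/K)\geq m^{-m}\bigl(p^{-2e}10^{-15.5-23g}h(E)\bigr)^m$, deduce bounded height, and invoke \cite{MB85} Th\'eor\`eme 4.6. The paper stops there and does not spell out the finite constant field case; since ``limited'' is defined via the image in $\mathcal{A}_1$, the intended finiteness is that of $j$-invariants (equivalently $\overline{K}$-isomorphism classes), so your twist discussion is an extra refinement rather than a needed step.
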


\begin{proof}
Apply Theorem \ref{ffp} and (\ref{Minkowski2}) to get the inequality $$\mathrm{Reg}_L(E/K)\geq \frac{1}{m^m} \Big( p^{-2e} 10^{-15.5-23g} h(E)\Big)^m.$$ As soon as the rank $m$ is positive and bounded, and as long as $e$ is bounded, a bounded regulator implies a bounded height. Then apply \cite{MB85} Th\'eor\`eme 4.6 page 236.
\end{proof}

\section{Questions}\label{questions}

\subsection{Regulator and rank}

\begin{ques}
Let $E$ be an elliptic curve over a number field $K$. Let $m_K$ be its Mordell-Weil rank. Can one prove 
\begin{equation}\label{Q1}
\mathrm{Reg}(E/K)\geq c_0 \,m_K,
\end{equation}
where $c_0>0$ is a constant depending at most on $K$?
\end{ques}

The case of rank zero is trivial. Already in rank one, it would imply a non-trivial lower bound on the height of a generator of $E(K)$.
\\

\subsection{Small points on an elliptic curve}
A weaker version of Lang's conjectural inequality would already provide new insights. 
\begin{ques}
Let $E$ be an elliptic curve over a number field $K$. Let $P$ be a $K$-rational point of infinite order. Can one prove 
\begin{equation}\label{Q2}
\widehat{h}_E(P)\geq f(h(E)),
\end{equation}
where $f$ is a function tending to infinity when $h(E)$ tends to infinity?
\end{ques}
The interest here comes from the following remark: if one combines (\ref{Q2}) with inequality (\ref{Minkowski2}), this would imply a big improvement on Corollary \ref{NorthReg} by replacing the condition $m\geq4$ by $m\geq1$. Note that the inequality $\widehat{h}_E(P)\geq c_0>0$, where $c_0$ depends at most on $K$, is already an open and interesting problem though.

\subsection{Regulator and injectivity volume}

The Lang-Silverman conjecture in dimension $g\geq 1$, or the ABC conjecture in dimension 1, imply that on a simple principally polarized abelian variety $(A,L)$ of dimension $g$ over a number field $K$, when the rank is non-zero we have $\lambda_1^2\gg h(A)$. Let $\rho_\sigma(A,L)$ be the injectivity diameter of $A_\sigma(\mathbb{C})$ and $V_{\sigma}(A,L)$ be the injectivity volume, as described in \cite{Au16}. We would thus get by the Matrix Lemma $$m\RegL(A/K)^{1/m}\gg  \lambda_1^2 \gg h(A)\gg \sum_{\sigma:K\hookrightarrow\mathbb{C}}\frac{1}{\rho_\sigma(A,L)^2}\gg \sum_{\sigma:K\hookrightarrow\mathbb{C}}\frac{1}{V_\sigma(A,L)^{1/g}},$$ where all the implied constants are positive and may depend on $K$ and $g$.

\begin{ques}
Can one prove independently 
\begin{equation}\label{Q3}
\RegL(A/K)\gg \left(\displaystyle{\sum_{\sigma:K\hookrightarrow\mathbb{C}}}\frac{1}{V_\sigma(A,L)^{1/g}}\right)^{c_0 m},
\end{equation}
for a universal $c_0>0$? For elliptic curve with $m\geq 5$, the value $c_0=\frac{1}{15}$ is valid from Theorem \ref{blichreg}, with an implied constant depending on $K$ and on $m$.
\end{ques}


\end{document}